\title{Canonical Stratifications along Bisheaves}
\author{Vidit Nanda}
\email{nanda@maths.ox.ac.uk}
\author{Amit Patel}
\email{akpatel@colostate.edu}
\renewcommand{\mathbf}{\mathbold}
\renewcommand{\mathcal}{\mathscr}
\theoremstyle{plain}
  \newtheorem{theorem}{Theorem}[section]
  \newtheorem{corollary}[theorem]{Corollary}
  \newtheorem{lemma}[theorem]{Lemma}
  \newtheorem{proposition}[theorem]{Proposition}
\theoremstyle{definition}
  \newtheorem{definition}[theorem]{Definition}
  \newtheorem{ex}[theorem]{Example}
  \newtheorem{remark}[theorem]{Remark}
  \newenvironment{example}{\begin{ex}}{\end{ex}}
  \newcommand{\R}{\mathbb{R}}
  \newcommand{\Z}{\mathbb{Z}}
  \newcommand{\category}[1]{\mathrm{\mbox{\bf #1}}}
	\newcommand{\Face}{{\category{Fc}}}
	\newcommand{\Ab}{\category{Ab}}
	\newcommand{\bH}{\text{H}}
	\newcommand{\cU}{\mathcal{U}}
	\newcommand{\HBM}{\bH^\text{\tiny BM}}
	\newcommand{\inj}{\hookrightarrow}
\newcommand{\st}{\category{st }}
	\newcommand{\X}{\mathcal{X}}
	\newcommand{\M}{\mathcal{M}}
	\newcommand{\K}{\mathcal{K}}
	\newcommand{\Fc}[1]{\Fc[#1]} 
\newcommand{\shf}[1]{\overline{\mathbf{#1}}}
\newcommand{\csh}[1]{\underline{\mathbf{#1}}}
\newcommand{\bsh}[1]{\overline{\underline{\mathbf{#1}}}}
\newcommand{\leqdn}{\mathrel{\text{\rotatebox[origin=c]{-90}{$\leq$}}}}
\newcommand{\roteq}{\mathrel{\text{\rotatebox[origin=c]{90}{$=$}}}}
\date{}
\begin{document}

\maketitle

\begin{abstract}A theory of bisheaves has been recently introduced to measure the homological stability of fibers of maps to manifolds. A bisheaf over a topological space is a triple consisting of a sheaf, a cosheaf, and compatible maps from the stalks of the sheaf to the stalks of the cosheaf. In this note we describe how, given a bisheaf constructible (i.e., locally constant) with respect to a triangulation of its underlying space, one can explicitly determine the coarsest stratification of that space for which the bisheaf remains constructible.\end{abstract}

\section*{Introduction}

The space of continuous maps from a compact topological space $\X$ to a metric space $\M$ carries a natural metric structure of its own --- the distance between $f,g:\X\to\M$ is given by $\sup_{x \in \X} d_\M[f(x),g(x)]$, where $d_\M$ is the metric on $\M$. It is natural to ask how sensitive the fibers $f^{-1}(p)$ over points $p \in \M$ are to perturbations of $f$ in this metric space of maps $\X\to\M$. The case $\M= \R$ (endowed with its standard metric) is already interesting, and lies at the heart of both Morse theory \cite{milnor} and the stability of persistent homology \cite{interlevelsets, cdsgo, cseh}.

The theory of {\bf bisheaves} was introduced in \cite{macpat} to provide stable lower bounds on the homology groups of such fibers in the case where $f$ is a reasonably tame (i.e., Thom-Mather stratified) map. The fibers of $f$ induce two algebraic structures generated by certain basic open subsets $\cU \subset \M$ --- their Borel-Moore homology $\HBM_\bullet(f^{-1}(\cU)) =  \bH_\bullet(\X,\X-f^{-1}(\cU))$ naturally forms a sheaf of abelian groups, whereas their singular homology $\bH_\bullet(f^{-1}(\cU))$ naturally forms cosheaf. If $\M$ is a $\Z$-orientable manifold, then its fundamental class---let's call it $o \in \bH^m_c(\M)$---restricts to a generator $o_\cU$ of the top compactly-supported cohomology $\bH^m_c(\cU)$ of basic open subsets $\cU \subset \M$. The cap product \cite[Sec 3.3]{hatcher} with its pullback $f^*(o_\cU) \in \bH^m_c(f^{-1}(\cU))$ therefore induces group homomorphisms
\[
\HBM_{m+\bullet}(f^{-1}(\cU)) {\longrightarrow} \bH_\bullet(f^{-1}(\cU))
\]
from the ($m$-shifted) Borel-Moore to the singular homology over $\cU$. These maps commute with restriction maps of the sheaf and extension maps of the cosheaf by naturality of the cap product. This data, consisting of a sheaf plus a cosheaf along with such maps is the prototypical and motivating example of a bisheaf. 

Fix an arbitrary open set $\cU \subset \M$ and restrict the bisheaf described above to $\cU$. 
We replace the restricted Borel-Moore sheaf with its largest sub {\em episheaf} 
(i.e., a sheaf whose restriction maps on basic opens are all surjective), and similarly, we replace the restricted singular cosheaf with its largest quotient {\em monocosheaf} (i.e., a cosheaf whose extension maps on basic opens are all injective).
It is not difficult to confirm that even after the above alterations, one can induce canonical maps from the episheaf to the monocosheaf which form a new bisheaf over $\cU$. The stalkwise-images of the maps from the episheaf to the monocosheaf in this new bisheaf form a {\em local system} over $\cU$ --- this may be viewed as either a sheaf or a cosheaf depending on taste, since all of its restriction/extension maps are invertible. The authors of \cite{macpat} call this the {\bf persistent local system} of $f$ over $\cU$.
The persistent local system of $f$ over $\cU$ is a collection of subquotients of $\bH_\bullet(f^{-1}(p))$ for all $p \in \cU$
and provides a principled lower bound for the fiberwise homology of $f$ over $\cU$ which is stable to perturbations. For a sufficiently small $\epsilon > 0$, let $\cU_\epsilon$ be the shrinking of $\cU$ by $\epsilon$. For all tame maps $g: \X \to \M$ within $\epsilon$ of $f$, the persistent local system of $f$ over $U$ restricted to $\cU_\epsilon$ is a fiberwise subquotient of the persistent local system of $g$ over $\cU_\epsilon$.

The goal of this paper is to take the first concrete steps towards rendering this new theory of bisheaves amenable to explicit machine computation. In Sec \ref{sec:simpbish} we introduce the notion of a {\bf simplicial bisheaf}, i.e., a bisheaf which is constructible with respect to a fixed triangulation of the underlying manifold $\M$. Such bisheaves over simplicial complexes are not much harder to represent on computers than the much more familiar cellular (co)sheaves --- if we work with field coefficients rather than integers, for instance, a simplicial bisheaf amounts to the assignment of one matrix to each simplex $\sigma$ of $\M$ and two matrices to each face relation $\sigma \leq \sigma'$, subject to certain functoriality constraints --- more details can be found in Sec \ref{sec:simpbish} below. 

On the other hand, bisheaves are profoundly different from (co)sheaves in certain fundamental ways --- as noted in \cite{macpat}, the category of bisheaves, simplicial or otherwise, over a manifold $\M$ is not abelian. Consequently, we have no direct recourse to bisheafy analogues of basic (co)sheaf invariants such as sheaf cohomology and cosheaf homology. Even so, some of the ideas which produced efficient algorithms for computing cellular sheaf cohomology \cite{cgn} can be suitably adapted towards the task of extracting the persistent local system from a given simplicial bisheaf. One natural way to accomplish this is to find the coarsest partition of the simplices of $\M$ into regions so that over each region the cap product map relating the Borel-Moore stalk to the singular costalk is locally constant. This idea is made precise  in Sec \ref{sec:strat}.

Our main construction is described in Sec \ref{sec:main}. Following \cite{strat}, we use the bisheaf data over an $m$-dimensional simplicial complex $\M$ to explicitly construct a stratification by simplicial subcomplexes 
\[
\varnothing = \M_{-1} \subset \M_0 \subset \cdots \subset \M_{m-1} \subset \M_m = \M,
\]
called the {\bf canonical stratification of $\M$} along the given bisheaf; the connected components of each $\M_d - \M_{d-1}$, called the {\em canonical $d$-strata}, enjoy three remarkably convenient properties for our purposes.
\begin{enumerate}
	\item {\em Constructibility}: if two simplices lie in the same stratum, then the cap-product maps assigned to them by the bisheaf are related by invertible transformations.
	\item {\em Homogeneity}: if two adjacent simplices $\sigma \leq \sigma'$ of $\M$ lie in different strata, then the (isomorphism class of the) bisheaf data assigned to the face relation $\sigma \leq \sigma'$ in $\M$ depends only on those strata.
	\item {\em Universality:} this is the coarsest stratification (i.e., the one with fewest strata) satisfying both constructibility and homogeneity. 
\end{enumerate} 

Armed with the canonical stratification of $\M$ along a bisheaf, one can reduce the computational burden of building the associated persistent local system as follows. Rather than extracting an episheaf and monocosheaf for {\em every} simplex and face relation, one only has to perform these calculations for each canonical stratum. The larger the canonical strata are, the more computationally beneficial this strategy becomes.

{\footnotesize
\subsection*{Acknowledgements} This work had its genesis in the {\em Abel Symposium on Topological Data Analysis}, held in June 2018 amid the breathtaking fjords of Geiranger, where both authors gave invited lectures. AP spoke about \cite{macpat} and VN about \cite{strat}, and it became clear to us almost immediately that there were compelling practical reasons to combine these works. It is a sincere pleasure to thank the Abel Foundation and the Abel Symposium organizers, particularly Nils Baas and Marius Thaule, for giving us the opportunity to work in such an inspiring location. The ideas of Robert MacPherson are densely sprinkled throughout not only this paper, but also across both its progenitors \cite{macpat} and \cite{strat}. We are grateful to the Institute for Advanced Study for hosting many of our discussions with Bob. We also thank the anonymous referee for encouraging us to clarify Def \ref{def:fstrat} and the subsequent remark.

VN’s work is supported by The Alan Turing Institute under the EPSRC grant number EP/N510129/1, and by the Friends of the Institute for Advanced Study. AP's work is supported by the National Science Foundation under agreement number CCF-1717159.
}

\section{Bisheaves around Simplicial Complexes} \label{sec:simpbish}

Let $\M$ be a simplicial complex and let $\Ab$ denote the category of abelian groups. By a {\bf {sheaf} over $\M$} we mean a functor
\[
\shf{F}:\Face(\M) \to \Ab
\] 
from the poset of simplices in $\M$ ordered by the face relation to the abelian category $\Ab$. In other words, each simplex $\sigma$ of $\M$ is assigned an abelian group $\shf{F}(\sigma)$ called the {\em stalk} of $\shf{F}$ over $\sigma$, while each face relation $\sigma \leq \sigma'$ among simplices is assigned a group homomorphism $\shf{F}(\sigma \leq \sigma'):\shf{F}(\sigma) \to \shf{F}(\sigma')$ called its {\em restriction map}. These assignments of objects and morphisms are constrained by the usual functor-laws of associativity and identity. A morphism $\shf{\alpha}:\shf{F} \to \shf{G}$ of sheaves over $\M$ is prescribed by a collection of group homomorphisms $\{\shf{\alpha}_\sigma:\shf{F}(\sigma) \to \shf{G}(\sigma)\}$, indexed by simplices of $\M$, which must commute with restriction maps. 

The dual notion is that of a {\bf {cosheaf} under $\M$}, which is a functor
\[
\csh{F}:\Face(\M)^\text{op} \to \Ab;
\] 
this assigns to each simplex $\sigma$ an abelian group $\csh{F}(\sigma)$ called its {\em costalk}, and to each face relation $\sigma \leq \sigma'$ a contravariant group homomorphism $\csh{F}(\sigma \leq \sigma'):\csh{F}(\sigma') \to \csh{F}(\sigma)$, called the {\em extension map}. As before, a morphism $\csh{\alpha}:\csh{F} \to \csh{G}$ of cosheaves under $\M$ is a simplex-indexed collection of abelian group homomorphisms  $\{\csh{\alpha}_\sigma:\csh{F}(\sigma) \to \csh{G}(\sigma)\}$ which must commute with extension maps. For a thorough introduction to cellular (co)sheaves, the reader should consult \cite{curry}.

\subsection{Definition}

The following algebraic-topological object (see \cite[Def 5.1]{macpat}) coherently intertwines sheaves with cosheaves. 
\begin{definition}
	\label{def:bisheaf}
	A {\bf {bisheaf} around $\M$} is a triple $\bsh{F} = (\shf{F},\csh{F},F)$ defined as follows. Here $\shf{F}$ is a sheaf over $\M$, while $\csh{F}$ is an cosheaf under $\M$, and 
	\[
	F = \{F_\sigma:\shf{F}(\sigma) \to \csh{F}(\sigma)\}
	\] is a collection of abelian group homomorphisms indexed by the simplices of $\M$ so that the following diagram, denoted $\bsh{F}(\sigma \leq \sigma')$, commutes for each face relation $\sigma \leq \sigma'$:
	\[
	\xymatrixcolsep{1in}
	\xymatrix{
		\shf{F}(\sigma) \ar@{->}[d]_{F_\sigma}\ar@{->}[r]^{\shf{F}(\sigma \leq \sigma')}& \shf{F}(\sigma') \ar@{->}[d]^{F_{\sigma'}} \\
		\csh{F}(\sigma) & \csh{F}(\sigma') \ar@{->}[l]^{\csh{F}(\sigma \leq \sigma')}\\
	}
	\]
	(The right-pointing map is the restriction map of the sheaf $\shf{F}$, while the left-pointing map is the extension map of the cosheaf $\csh{F}$).
\end{definition}

\subsection{Bisheaves from Fibers}

The following construction is adapted from \cite[Ex 5.3]{macpat}. Consider a map $f:\X\to\M$ whose target space $\M$ is a connected, triangulated manifold of dimension $m$. Let $o$ be a generator of the top compactly-supported cohomology group $\bH_\text{\tiny c}^m(\M)$. Our assumptions on $\M$ imply $\bH_\text{\tiny c}^m(\M) \simeq \Z$, so $o \in \{\pm 1\}$. Now the inclusion $\st \sigma \subset \M$ of the open star\footnote{The open star of $\sigma \in \M$ is given by $\st \sigma = \{\tau \in \M \mid \sigma \leq \tau\}$.} of any simplex $\sigma$ in $\M$ induces an isomorphism on $m$-th compactly supported cohomology, so let $o|_\sigma$ be the image of $o$ in $\bH^m_c(\st \sigma)$ under this isomorphism. Since $f$ restricts to a map $f^{-1}(\st \sigma) \to \st \sigma$, the generator $o|_\sigma$ pulls back to a class $f^*(o|_\sigma)$ in $\bH^m_c(f^{-1}(\st \sigma))$. The cap product with $f^*(o|_\sigma)$ therefore constitutes a map 
\[
\xymatrixcolsep{0.7in}
\xymatrix{
	\bH_{m+\bullet}^\text{\tiny BM}\left(f^{-1}(\st \sigma)\right) \ar@{->}[r]^-{\frown f^*(o|_\sigma)} & \bH_\bullet\left(f^{-1}(\st \sigma)\right)
}
\]
from the Borel-Moore homology to the singular homology of the fiber $f^{-1}(\st \sigma)$. We note that the former naturally forms a sheaf over $\M$ while the later forms a cosheaf; as mentioned in the Introduction, the above data constitutes the primordial example of a bisheaf.

\section{Stratifications along Bisheaves} \label{sec:strat}

Throughout this section, we will assume that $\bsh{F} = (\shf{F},\csh{F},F)$ is a bisheaf of abelian groups over some simplicial complex $\M$ of dimension $m$ in the sense of Def \ref{def:bisheaf}. We do not require this $\M$ to be a manifold.

\begin{definition}
	\label{def:fstrat}
An {\bf $\bsh{F}$-{stratification} of $\M$} is a filtration  $\K_\bullet$ by subcomplexes:
\[
\varnothing = \K_{-1} \subset \K_0 \subset \cdots \subset \K_{m-1} \subset \K_m = \M,
\]
so that connected components of the (possibly empty) difference $\K_d - \K_{d-1}$, called the {\em $d$-dimensional strata} of $\K_\bullet$, obey the following axioms.
\begin{enumerate}
		\item{\bf Dimension:} The maximum dimension of simplices lying in a $d$-stratum should precisely equal $d$ (but we do not require every simplex in a $d$-stratum $S$ to be the face of some $d$-simplex in $S$).
		
		\item{\bf Frontier:} The transitive closure of the following binary relation $\prec$ on the set of all strata forms a partial order: we say $S \prec S'$ if there exist simplices $\sigma \in S$ and $\sigma' \in S'$ with $\sigma \leq \sigma'$. Moreover, this partial order is graded in the sense that $S \prec S'$ implies $\dim S \leq \dim S'$, with equality of dimension occurring if and only if $S = S'$. 

		\item {\bf Constructibility:} $\bsh{F}$ is {locally constant} on each stratum. Namely, if two simplices $\sigma \leq \tau$ of $\M$ lie in the same stratum, then $\shf{F}(\sigma \leq \tau)$ and $\csh{F}(\sigma \leq \tau)$ are both isomorphisms.
\end{enumerate}
\end{definition}

\begin{remark}
		
	It follows from constructibility (and the fact that strata must be connected) that the commuting diagram $\bsh{F}(\sigma \leq \sigma')$ assigned to simplices $\sigma \leq \sigma'$ of $\M$ depends, up to isomorphism, only on the strata containing $\sigma$ and $\sigma'$. That is, given any other pair $\tau \leq \tau'$  so that $\sigma$ and $\tau$ lie in the same stratum $S$ while $\sigma'$ and $\tau'$ lie in the same stratum $S'$, there exist four isomorphisms (depicted as dashed vertical arrows) which make the following cube of abelian groups commute up to isomorphism:
	\[
	\xymatrixcolsep{.4in}
	\xymatrixrowsep{0.05in}
	\xymatrix{
		& \shf{F}(\sigma) \ar@{-->}[dddd]_\sim \ar@{->}[rr]^{\shf{F}(\sigma \leq \sigma')} \ar@{->}[dl]_{F_\sigma} & & \shf{F}(\sigma') \ar@{-->}[dddd]^\sim \ar@{->}[dr]^{F_{\sigma'}} &\\
		\csh{F}(\sigma)  & & & &  \csh{F}(\sigma') \ar@{->}[llll]^{\csh{F}(\sigma \leq \sigma')} \\
		& & & &  \\
		& & & & \\
		& \shf{F}(\tau) \ar@{->}[rr]^{\shf{F}(\tau \leq \tau')} \ar@{->}[dl]_{F_\tau} & & \shf{F}(\tau') \ar@{->}[dr]^{F_{\tau'}} &\\
		\csh{F}(\tau) \ar@{-->}[uuuu]^\sim & & & &  \csh{F}(\tau') \ar@{->}[llll]^{\csh{F}(\tau \leq \tau')} \ar@{-->}[uuuu]_\sim
	}
	\]	
	These vertical isomorphisms are not unique, but rather depend on choices of paths lying in $S$ (from $\sigma$ to $\tau$) and in $S'$ (from $\sigma'$ to $\tau'$).
\end{remark}

\begin{example} The first example of an $\bsh{F}$-stratification of $\M$ that one might consider is the {\bf skeletal} stratification, where the $d$-strata are simply the $d$-simplices. 
\end{example}
	
Since we are motivated by computational concerns, we seek an $\bsh{F}$-stratification with as few strata as possible. To make this notion precise, note that the set of all $\bsh{F}$- stratifications of $\M$ admits a partial order --- we say that $\K_\bullet$ {\em refines} another $\bsh{F}$-stratification $\K'_\bullet$ if every stratum of $\K_\bullet$ is  contained inside some stratum of $\K'_\bullet$ (when both are viewed as subspaces of $\M$). The skeletal stratification refines all the others, and serves as the maximal object in this poset; and the object that we wish to build here lies at the other end of this hierarchy.

\begin{definition}
	\label{def:canstr}
The {\bf canonical} $\bsh{F}$-stratification of $\M$ is the minimal object in the poset of $\bsh{F}$-stratifications of $\M$ ordered by refinement --- every other stratification is a refinement of the canonical one.
\end{definition}

The reader may ask why this object is well-defined at all --- why should the poset of all $\bsh{F}$-stratifications admit a minimal element, and even if it does, why should that element be unique? Taking this definition as provisional for now, we will establish the existence and uniqueness of the {canonical $\bsh{F}$-stratification} of $\M$ via an explicit construction in the next section.

\section{The Main Construction} \label{sec:main}

As before, we fix a bisheaf $\bsh{F} = (\shf{F},\csh{F},F)$ on an $m$-dimensional simplicial complex $\M$. Our goal is to construct the canonical $\bsh{F}$-stratification, which was described in Def \ref{def:canstr} and will be denoted here by $\M_\bullet$:
\[
\varnothing = \M_{-1} \subset \M_0 \subset \cdots \subset \M_{m-1} \subset \M_m = \M.
\] We will establish the existence and uniqueness of this stratification by constructing the strata in reverse-order: the $m$-dimensional canonical strata will be identified before the $(m-1)$-dimensional canonical strata, and so forth. There is a healthy precedent for such top-down constructions that dates back to work of Whitney \cite{whitney} and Goresky-MacPherson \cite[Sec 4.1]{gm2}.

\subsection{Localizations of the Face Poset}
The key ingredient here, as in \cite{strat}, is the ability to {\em localize} \cite[Ch I.1]{gabzis} the poset $\Face(\M)$ about a special sub-collection $W$ of face relations that is {closed} in the following sense: if $(\sigma \leq \tau)$ and $(\tau \leq \nu)$ both lie in $W$ then so does $(\sigma \leq \nu)$.

\begin{definition}
\label{def:loc}
	Let $W$ be a closed collection of face relations in $\Face(\M)$ and let $W^+$ denote the union of $W$ with all equalities of the form $(\sigma = \sigma)$ for $\sigma$ ranging over simplices in $\M$. The {\bf localization} of $\Face(\M)$ about $W$ is a category $\Face_W(\M)$ whose objects are the simplices of $\M$, while morphisms from $\sigma$ to $\tau$ are given by equivalence classes of finite (but arbitrarily long) $W$-{\em zigzags}. These have the form
	\[
	(\sigma \leq \tau_0 \geq \sigma_0 \leq \cdots \leq \tau_k \geq \sigma_k \leq \tau), \text{ where:}
	\]
	\begin{enumerate}
		\item only relations in $W^+$ can point backwards (i.e., $\geq$),
		\item composition is given by concatenation, and
		\item the trivial zigzag $(\sigma = \sigma)$ represents the identity morphism of each simplex $\sigma$.
	\end{enumerate}
	The equivalence between $W$-zigzags is generated by the transitive closure of the following basic relations. Two such zigzags are related
	\begin{itemize}
		\item {\em horizontally} if one is obtained from the other by removing internal equalities, e.g.:
		\begin{align*}
		\left(\cdots  \leq \tau_0 \geq \sigma_0 = \sigma_0 \geq \tau_1 \leq \cdots \right) &\sim \left(\cdots  \leq \tau_0 \geq \tau_1 \leq \cdots\right), \\ 
		\left(\cdots  \geq \sigma_0 \leq \tau_1 = \tau_1 \leq \sigma_1 \geq \cdots \right) &\sim \left(\cdots  \geq \sigma_0 \leq \sigma_1 \geq \cdots\right),  
		\end{align*}
		\item or {\em vertically}, if they form the rows of a grid:
		\begin{alignat*}{11}
		\sigma~ & ~\leq~ &   ~\tau_0~  &  ~\geq~ &    ~\sigma_0~   &   ~\leq~ & ~\cdots~ & ~\geq~ &    ~\sigma_k~   &  ~\leq~ & ~\tau \\
		\roteq~  &              & \leqdn~  &              &  \leqdn~ &                &                &             & \leqdn~    &               &   \roteq    \\
		\sigma~ & ~\leq~ &   ~\tau'_0~  &  ~\geq~ &     ~\sigma'_0~  &  ~\leq~  &  ~\cdots~ & ~\geq~ &     ~\sigma'_k~ &   ~\leq~ & ~\tau
		\end{alignat*}
		whose vertical face relations (also) lie in $W^+$.
	\end{itemize} 
\end{definition}

\begin{remark}
These horizontal and vertical relations are designed to render invertible all the face relations $(\sigma \leq \tau)$ that lie in $W$. The backward-pointing $\tau \geq \sigma$ which may appear in a $W$-zigzag serves as the formal inverse to its forward-pointing counterpart $\sigma \leq \tau$ --- one can use a vertical relation followed by a horiztonal relation to achieve the desired cancellations whenever $(\cdots \geq \sigma \leq \tau \geq \sigma \leq \cdots)$ or $(\cdots \leq \tau \geq \sigma \leq \tau \geq \cdots)$ are encountered as substrings of a $W$-zigzag.   
\end{remark}

\subsection{Top Strata}
Consider the subset of face relations in $\Face(\M)$ to which $\bsh{F}$ assigns invertible maps, i.e.,
\begin{align}
\label{eq:E}
E = \{(\sigma \leq \tau) \text{ in } \Face(\M) \mid \shf{F}(\sigma \leq \tau) \text{ and }\csh{F}(\sigma \leq \tau) \text{ are isomorphisms}\}.
\end{align}
One might expect, in light of the constructibility requirement of Def \ref{def:fstrat}, that finding canonical strata would amount to identifying isomorphism classes in the localization of $\Face(\M)$ about $E$. Unfortunately, this does not work --- the pieces of $\M$ obtained in such a manner do not obey the frontier axiom in general. To rectify this defect, we must suitably modify $E$. Define the set of simplices
\begin{align*}
U = \{\sigma \in \Face(\M) \mid (\sigma \leq \tau) \in E \text{ for all } \tau \in \st \sigma\},
\end{align*}
and consider the subset $W \subset E$ given by
\begin{align}
\label{eq:W}
W = \{(\sigma \leq \tau) \in E \mid \sigma \in U\}.
\end{align}
Thus, a pair of adjacent simplices $(\sigma \leq \tau)$ of $\M$ lies in $W$ if and only if the sheaf $\shf{F}$ and cosheaf $\csh{F}$ assign isomorphisms not only to $(\sigma \leq \tau$) itself, but also to {\em all other face relations} encountered among simplices in the open star of $\sigma$. For our purposes, it is important to note that $U$ is {\em upward closed} as a subposet of $\Face(\M)$, meaning that $\sigma \in U$ and $\sigma' \geq \sigma$ implies $\sigma' \in U$.

\begin{proposition}
	\label{prop:isostrat}
Every simplex $\tau$ lying in an $m$-stratum of any $\bsh{F}$-stratification of $\M$ must be isomorphic in $\Face_{W}(\M)$ to an $m$-dimensional simplex of $\M$. 
\end{proposition}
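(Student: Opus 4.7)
My plan is to fix any $\bsh{F}$-stratification $\K_\bullet$ of $\M$, let $S$ be its $m$-stratum containing $\tau$, and exhibit a $W$-zigzag from $\tau$ to an $m$-simplex in $S$ (which exists by the dimension axiom). By construction of $\Face_W(\M)$ in Def \ref{def:loc}, the backward arrows in a $W$-zigzag are formal inverses of $W$-relations, so any such zigzag represents an isomorphism in $\Face_W(\M)$, and the proposition will follow.

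The crux of the argument is the observation that \emph{every simplex of $S$ lies in $U$.} To see this, fix $\sigma \in S$ and any $\nu \in \st \sigma$, and let $S'$ be the stratum of $\K_\bullet$ containing $\nu$. Since $\sigma \leq \nu$, we have $S \prec S'$ in the frontier order; the frontier axiom then forces $m = \dim S \leq \dim S' \leq m$, and its equality case gives $S' = S$. Thus $\sigma$ and $\nu$ lie in the common stratum $S$, so constructibility makes both $\shf{F}(\sigma \leq \nu)$ and $\csh{F}(\sigma \leq \nu)$ isomorphisms; equivalently, $(\sigma \leq \nu) \in E$. Since $\nu$ was arbitrary in $\st \sigma$, indeed $\sigma \in U$. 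In particular, every face relation $(\sigma \leq \sigma')$ with both endpoints in $S$ now lies in $W$.

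To finish, pick any $m$-simplex $\sigma^* \in S$ and connect $\tau$ to $\sigma^*$ through $S$. Since $S$ is a connected component of the topological subspace $|\K_m| \setminus |\K_{m-1}|$ and is a union of open simplices of $\M$ glued along their common boundary faces in $\M$, a standard argument shows that the comparability graph on the simplices of $S$ is connected: there is a finite sequence $\tau = \sigma_0, \sigma_1, \ldots, \sigma_k = \sigma^*$ in $S$ with consecutive terms related by a face relation in one of the two possible directions. By the previous paragraph each such relation lies in $W$, so after padding with equalities to match the alternating $\leq \geq \leq \geq \cdots$ pattern of Def \ref{def:loc}, we obtain a valid $W$-zigzag from $\tau$ to $\sigma^*$, whose image in $\Face_W(\M)$ is the desired isomorphism. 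The main subtle point of the proof is precisely this passage from topological connectedness of $|S|$ to combinatorial connectedness through face relations; once that step is justified, everything else reduces to routine bookkeeping on zigzag presentations.
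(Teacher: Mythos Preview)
Your proof is correct and follows essentially the same route as the paper's: connect $\tau$ to an $m$-simplex within the $m$-stratum $S$ by a zigzag of face relations, use the frontier axiom to show the relevant simplices lie in $U$ (you show all of $S$ lies in $U$, the paper shows only the local minima of the zigzag do, which suffices), and invoke constructibility to conclude each relation lies in $W$ so that the zigzag is invertible in $\Face_W(\M)$. The only point you flag that the paper leaves implicit is the passage from topological connectedness of $S$ to connectedness of its face-incidence graph; both arguments treat this as standard.
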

\begin{proof}
Assume $\tau$ lies in an $m$-dimensional stratum $S$ of an $\bsh{F}$-stratification of $\M$. By the dimension axiom, $S$ contains at least one $m$-simplex, which we call $\sigma$. Since $S$ is connected,  there exists a zigzag of simplices lying entirely in $S$ that links $\sigma$ to $\tau$, say
\[
\zeta = (\sigma \leq \tau_0 \geq \sigma_0 \leq \cdots \leq \tau_k \geq \sigma_k \leq \tau).
\]
By the constructibility requirement of Def \ref{def:fstrat}, every face relation in sight (whether $\leq$ or $\geq$) lies in $E$. And by the frontier requirement of that same definition, membership in $m$-strata is upward closed, so in particular all the $\sigma_\bullet$'s lie in $U$. Finally, since $\sigma$ is top-dimensional and $\tau_0 \geq \sigma$, we must have $\tau_0 = \sigma$. Thus, not only is our $\zeta$ a $W$-zigzag, but it also represents an invertible morphism in $\Face_{W}(\M)$. Indeed, a $W$-zigzag representing its inverse can be obtained simply by traversing backwards:
\[
\zeta^{-1} = (\tau \leq \tau \geq \sigma_k \leq \tau_k \geq \cdots \leq \tau_0 \geq \sigma \leq \sigma).
\]
This confirms that $\sigma$ and $\tau$ are isomorphic in $\Face_{W}(\M)$, as desired.
\end{proof}

Given the preceding result, the coarsest $m$-strata that one could hope to find are isomorphism classes of $m$-dimensional simplices in $\Face_{W}(\M)$. 

\begin{proposition}
\label{prop:topstrat}
The canonical $m$-strata of $\M_\bullet$ are precisely the isomorphism classes of $m$-dimensional simplices in $\Face_{W}(\M)$.
\end{proposition}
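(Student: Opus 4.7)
The plan is to use the localization $\Face_W(\M)$ from Def \ref{def:loc} to read off the $m$-strata directly. I would let $T$ denote the set of simplices of $\M$ that are $W$-isomorphic to some $m$-simplex, set $\M_{m-1} := \M - T$, and declare the candidate $m$-strata to be the $W$-isomorphism classes of $m$-simplices. Four things then need to be checked: that $\M_{m-1}$ is a subcomplex; that the connected components of $T$ (as a subspace of $\M$) coincide with these iso classes; that the classes satisfy the dimension and constructibility axioms of Def \ref{def:fstrat}; and that any $\bsh{F}$-stratification whatsoever has its $m$-strata contained inside these classes, making them the coarsest choice. The last item is essentially Prop \ref{prop:isostrat}, so the real work is concentrated in the first three, all of which hinge on a single lemma asserting that every $\tau \in T$ actually lies in $U$. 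That lemma is the step I expect to be the main obstacle: it is where top-dimensionality of $m$-simplices, the asymmetric definition of $W$ (which constrains only the source of each relation to lie in $U$), and the upward closedness of $U$ must be combined carefully.

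To prove the lemma, I would fix a witnessing $W$-zigzag $\sigma \leq \eta_0 \geq \rho_0 \leq \eta_1 \geq \cdots \leq \eta_\ell \geq \rho_\ell \leq \tau$ with $\sigma$ an $m$-simplex, and then propagate $U$-membership from $\sigma$ toward $\tau$. Top-dimensionality forces $\eta_0 = \sigma \in U$; each backward relation $\rho_i \leq \eta_i$ lies in $W^+$ and therefore puts $\rho_i \in U$ by the very definition of $W$; and each forward relation $\rho_i \leq \eta_{i+1}$, together with the final $\rho_\ell \leq \tau$, transports $U$-membership upward by the closure property of $U$. With the lemma in hand, two corollaries follow at once. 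First, if $\tau \in T$ and $\tau \leq \tau'$, then $\tau \in U$ forces $(\tau \leq \tau') \in E$, and $\tau \in U$ again forces this relation into $W$, so $\tau' \cong \tau$ in $\Face_W(\M)$ and $\tau' \in T$; hence $T$ is upward-closed and $\M_{m-1}$ is a subcomplex. Second, any geometric path in $T$ refines to a chain of adjacent simplices of $T$ whose consecutive face relations are all in $W$ by the same argument, so composition produces a $W$-isomorphism between the endpoints; conversely, a $W$-zigzag witnessing $\tau \cong \tau'$ lies in $T$ by the lemma and so furnishes a path. Thus the $W$-isomorphism classes of $m$-simplices are exactly the connected components of $T$.

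The remaining axioms of Def \ref{def:fstrat} then follow in a line each. Each class contains an $m$-simplex by construction and no simplex of $\M$ exceeds dimension $m$, so the dimension axiom holds; if $\sigma \leq \tau$ both lie in one class then $\sigma \in U$ by the lemma, forcing $\shf{F}(\sigma \leq \tau)$ and $\csh{F}(\sigma \leq \tau)$ to be isomorphisms; and no face relation can run between distinct $m$-strata (again by the $W$-argument), so the portion of the frontier axiom visible at this stage is vacuous, with relations to lower-dimensional strata to be settled by the inductive construction of $\M_{m-2}, \M_{m-3}, \ldots$ announced at the start of Sec \ref{sec:main}. Finally, Prop \ref{prop:isostrat} places every simplex of every $m$-stratum of every $\bsh{F}$-stratification inside $T$, and connectedness of strata together with the identification above confines each such stratum to a single iso class. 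Hence the $W$-iso classes of $m$-simplices simultaneously serve as $m$-strata of an $\bsh{F}$-stratification and form the coarsest such choice, which is the required identification with the canonical $m$-strata.
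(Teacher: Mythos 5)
Your proposal is correct and rests on the same ingredients as the paper's own proof: the localization $\Face_W(\M)$, the propagation of $U$-membership along a $W$-zigzag by combining top-dimensionality of the anchor $m$-simplex, the definition of $W$, and upward closure of $U$, and Prop~\ref{prop:isostrat} for universality. What you do differently is isolate the propagation step as a standalone lemma --- every simplex $W$-isomorphic to some $m$-simplex actually lies in $U$ --- and then run the three axioms off that single fact. This is a genuine streamlining of the constructibility check: once $\sigma \in U$ is known and $\tau \in \st\sigma$, the relation $(\sigma \leq \tau)$ lies in $E$ by definition of $U$, so both $\shf{F}(\sigma \leq \tau)$ and $\csh{F}(\sigma \leq \tau)$ are isomorphisms immediately. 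The paper instead builds explicit isomorphisms $\phi_\tau, \psi_\tau$ out of the bisheaf data over a horizontally reduced zigzag and argues a factorization $\shf{F}(\tau \leq \tau') = \phi_{\tau'}\circ\phi_\tau^{-1}$; your route reaches the same conclusion more directly and with less bookkeeping. One small imprecision to tidy in the lemma's proof: when you say a backward relation $\rho_i \leq \eta_i$ in $W^+$ ``puts $\rho_i \in U$ by the very definition of $W$,'' that is only literally true when the relation lies in $W$ rather than being a degenerate equality; in the case $\rho_i = \eta_i$ you should instead inherit $\rho_i \in U$ from $\eta_i \in U$, which you already have from the preceding step. (The paper avoids this nuisance by horizontally reducing the zigzag first; either fix works.) This is a cosmetic patch, not a gap.
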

\begin{proof}
Let $\sigma$ be an $m$-simplex of $\M$. We will show that the set $S$ of all $\tau$ which are isomorphic to $\sigma$ forms an $m$-stratum by verifying the frontier and constructibility axioms from Def \ref{def:fstrat} --- the dimension axiom is trivially satisfied since $\sigma \in S$. Note that for any $\tau \in S$ there exists some $W$-zigzag whose simplices all lie in $S$, and which represents an isomorphism from $\sigma$ to $\tau$ in $\Face_{W}(\M)$. (The existence of these zigzags shows that $S$ is connected). So let us fix for each $\tau \in S$ such a zigzag
\[
\zeta_\tau = (\sigma \leq \tau_0 \geq \sigma_0 \leq \cdots \geq \sigma_k \leq \tau),
\]
and assume it is horizontally reduced in the sense that none of its order relations (except possibly the first and last $\leq$) are equalities. Thus, all the $\sigma_d$'s in $\zeta_\tau$ lie in $U$. Upward closure of $U$ now forces simplices in $\st \sigma_k$, which contains $\st \tau$, to also lie in $S$. This shows that $S$ satisfies the frontier axiom, because any simplex of $\M$ with a face in $S$ must itself lie in $S$. We now turn to establishing constructibility. Since $\sigma$ is top-dimensional, we know that $\tau_0 = \sigma$, so in fact the first $\leq$ in $\zeta_\tau$ must be an equality. Consider the bisheaf data $\bsh{F}(\zeta_\tau)$ living over our zigzag:
\[
\xymatrix{
 \shf{F}(\sigma) \ar@{->}[r] \ar@{->}[d] & \shf{F}(\tau_0) \ar@{->}[d] & \shf{F}(\sigma_0)  \ar@{->}[d] \ar@{->}[r] \ar@{->}[l] & \cdots \ar@{->}[r] & \shf{F}(\tau_k) \ar@{->}[d] & \shf{F}(\sigma_k) \ar@{->}[d] \ar@{->}[l]\ar@{->}[r]& \shf{F}(\tau) \ar@{->}[d]\\
 \csh{F}(\sigma) & \csh{F}(\tau_0) \ar@{->}[l]\ar@{->}[r]& \csh{F}(\sigma_0) & \cdots \ar@{->}[l] & \csh{F}(\tau_k)\ar@{->}[l] \ar@{->}[r] & \csh{F}(\sigma_k) &\ar@{->}[l]\csh{F}(\tau)
}
\]
(All horizontal homomorphisms in the top row are restriction maps of $\shf{F}$, all horizontal homomorphisms in the bottom row are extension maps of $\csh{F}$, and the vertical morphism in the column of a simplex $\nu$ is $F_\nu$). By definition of $W$ (and the fact that $\sigma = \tau_0)$, all horizontal maps in sight are isomorphisms, so in particular we may replace all left-pointing arrows in the top row and all the right-pointing arrows in the bottom row by their inverses to get abelian group isomorphisms $\phi_\tau:\shf{F}(\sigma) \to \shf{F}(\tau)$ and $\psi_\tau:\csh{F}(\tau) \to \csh{F}(\sigma)$ that fit into a commuting square with $F_\sigma$ and $F_\tau$. Now given any other simplex $\tau' \geq \tau$ lying in $S$, one can repeat the argument above with the bisheaf data $\bsh{F}\left(\zeta_{\tau'} \circ \zeta_\tau^{-1}\right)$ to confirm that
\[ \shf{F}(\tau \leq \tau') = \phi_{\tau'} \circ \phi_\tau^{-1} \quad \text{and} \quad 
\csh{F}(\tau \leq \tau') =  \psi_\tau^{-1} \circ \psi_{\tau'}.
\]
Thus both maps are isomorphisms, as desired.
\end{proof}

\subsection{Lower Strata}

Our final task is to determine which simplices lie in canonical strata of dimension $< m$. This is accomplished by iteratively modifying both the simplicial complex $\M = \M_m$ and the set of face relations $W = W_m$ which was defined in (\ref{eq:W}) above. 

\begin{definition}
	\label{def:subpair}
Given $d \in \{0,1,\ldots,m-1\}$, assume we have the pair $(\M_{d+1},W_{d+1})$ consisting of a simplicial complex $\M_{d+1}$ of dimension $\leq (d+1)$ and a collection $W_{d+1}$ of face relations in $\Face(\M)$. The subsequent pair $(\M_{d},W_{d})$ is defined as follows.
\begin{enumerate}
	\item The set $\M_{d}$ is obtained from $\M_{d+1}$ by removing all the simplices which are isomorphic to some $(d+1)$-simplex in the localization $\Face_{W_{d+1}}(\M)$.
	
	\item To define $W_{d}$, first consider the collection of simplices
	\[
	U_{d} = \{\sigma \in \Face(\M_{d}) \mid (\sigma \leq \tau) \in E \text{ for all } \tau \in \textbf{st}_{d}~\sigma\};
	\]
	here $\textbf{st}_{d}~\sigma$ is the open star of $\sigma$ in $\M_{d}$ (i.e., the collection of all $\tau \in \M_{d}$ satisfying $\tau \geq \sigma$), while $E$ is the set of face relations defined in (\ref{eq:E}). Now, set
	\[
	W_{d} = W_{d+1} \cup \{(\sigma \leq \tau) \mid \sigma \in U_{d} \text{ and }\tau \in \textbf{st}_{d}~\sigma\}.
	\]
\end{enumerate}
\end{definition}

\begin{proposition}
The sequence $\M_\bullet$ described in Def \ref{def:subpair} constitutes a filtration of the original simplicial complex $\M$ by subcomplexes with the property that $\dim \M_d \leq d$ for each $d \in \{0,1,\ldots,m\}$. 
\end{proposition}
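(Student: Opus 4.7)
My strategy is downward induction on $d$, simultaneously maintaining that $\M_d$ is a subcomplex of $\M$, that $\dim \M_d \leq d$, and that $\M_d \subseteq \M_{d+1}$. The base case $d = m$ is immediate, since $\M_m = \M$.

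For the inductive step, write $R_d = \M_{d+1} \setminus \M_d$ for the set of simplices removed at stage $d$. The inclusion $\M_d \subseteq \M_{d+1}$ is clear from the construction. The dimension bound is also quick: every $(d+1)$-simplex $\tau \in \M_{d+1}$ is isomorphic to itself in $\Face_{W_{d+1}}(\M)$ via the trivial zigzag $(\tau = \tau)$, so $\tau \in R_d$; combining this with the inductive bound $\dim \M_{d+1} \leq d+1$ forces $\dim \M_d \leq d$.

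The substantive content is that $\M_d$ is a subcomplex. Since $\M_{d+1}$ is already a subcomplex by induction, it suffices to verify that $R_d$ is upward-closed within $\M_{d+1}$: whenever $\sigma \in R_d$ and $\sigma \leq \nu$ with $\nu \in \M_{d+1}$, then $\nu \in R_d$. The case $\dim \nu = d+1$ is immediate, leaving the essential case $\dim \nu \leq d$. Here I plan to imitate the frontier argument from the proof of Prop.~\ref{prop:topstrat}: fix a horizontally-reduced $W_{d+1}$-zigzag $\eta = (\sigma \leq \tau_0 \geq \sigma_0 \leq \cdots \geq \sigma_k \leq \tau)$ witnessing $\sigma \cong \tau$ for some $(d+1)$-simplex $\tau \in \M_{d+1}$. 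Each backward relation in $\eta$ forces the corresponding $\sigma_i$ to lie in some $U_{e_i}$ with $e_i \geq d+1$; a brief lemma established via the two-out-of-three property for composition of isomorphisms then shows that each $U_e$ is upward-closed inside $\M_e$, so $\textbf{st}_{e_i}\,\sigma_i \subseteq U_{e_i}$ and every simplex in this star is $W_{d+1}$-isomorphic to $\sigma_i$, and hence to $\tau$.

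I expect the main obstacle to be converting $\eta$ into a new $W_{d+1}$-zigzag starting at $\nu$ rather than $\sigma$. In the proof of Prop.~\ref{prop:topstrat} the analogous construction was essentially free because the top-dimensional hypothesis on the starting simplex forced the first backward relation to collapse to an identity; here no such collapse is available, and neither the initial forward relation $\sigma \leq \tau_0$ nor the reversed relation $\nu \geq \sigma$ is a priori in $W_{d+1}$. My proposed workaround is to exploit the $U$-membership of the first intermediate witness $\sigma_0$, together with its upward closure, to splice a new initial segment into $\eta$ certifying a $W_{d+1}$-relation between $\nu$ and some simplex of $\textbf{st}_{e_0}\,\sigma_0$. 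The combinatorial bookkeeping needed to make this splice rigorous---or, failing that, to iterate it using the filtration already produced at higher stages of the induction---is the heart of the argument.
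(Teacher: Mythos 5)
Your overall skeleton matches the paper's: downward induction, the observation that every $(d+1)$-simplex is removed via the trivial zigzag to establish $\dim\M_d\leq d$, and a reduction of the subcomplex claim to upward-closure of the removed set $R_d$ inside $\M_{d+1}$. That part is sound.

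The gap is in the splice. You orient the zigzag from $\sigma$ to $\tau$, as $\eta=(\sigma\leq\tau_0\geq\sigma_0\leq\cdots\geq\sigma_k\leq\tau)$, and then try to use $\sigma_0$ as the handle on $\nu$. But $\sigma_0$ and $\nu$ are not face-comparable: you only know $\sigma_0\leq\tau_0$ and $\sigma\leq\tau_0$ and $\sigma\leq\nu$, which places $\sigma_0$ and $\nu$ as two faces of possibly unrelated cofaces. Upward-closure of $U_{e_0}$ in $\M_{e_0}$ lets you push $\tau_0$ into $U_{e_0}$, but $\nu$ need not dominate $\tau_0$, so no element of $\textbf{st}_{e_0}\,\sigma_0$ is forced into a face relation with $\nu$, and the proposed $W_{d+1}$-relation never materializes. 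The collapse $\tau_0=\sigma$ you invoke from Prop.~\ref{prop:topstrat} is exactly what you lose at lower $d$, and nothing in the $\sigma\to\tau$ orientation replaces it.

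The correct move is to reverse the orientation. Since $\sigma\cong\tau$, choose a horizontally reduced $W_{d+1}$-zigzag running from $\tau$ to $\sigma$, say $(\tau\leq\alpha_0\geq\beta_0\leq\cdots\geq\beta_\ell\leq\sigma)$. Now the last intermediate $\beta_\ell$ satisfies $\beta_\ell\leq\sigma\leq\nu$, and the preceding backward relation $\alpha_\ell\geq\beta_\ell$ lies in $W_{d+1}$, so $\beta_\ell\in U_e$ for some $e\geq d+1$. Here is where the inductive hypothesis on the filtration enters essentially: $\nu\in\M_{d+1}\subseteq\M_e$ because the complexes $\M_{d+1}\subseteq\cdots\subseteq\M_e$ have already been shown to be nested subcomplexes. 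Hence $\nu\in\textbf{st}_e\,\beta_\ell$, so $(\beta_\ell\leq\nu)\in W_e\subseteq W_{d+1}$; likewise $(\beta_\ell\leq\sigma)\in W_{d+1}$. Both are invertible, giving $\nu\cong\beta_\ell\cong\sigma\cong\tau$. (If the zigzag has no backward step, horizontal reduction leaves $\tau\leq\sigma$, and the dimension bound $\dim\sigma\leq d+1=\dim\tau$ forces $\sigma=\tau$ and then $\nu=\tau$.) This is the content compressed into the paper's single sentence that $W_{d+1}$-isomorphism to a $(d+1)$-simplex propagates over $\textbf{st}_{d+1}\,\sigma$. Your two-out-of-three lemma for the upward-closure of each $U_e$ inside $\M_e$ is the right auxiliary ingredient; the only repair needed is the orientation of the zigzag and the choice of $\beta_\ell$ rather than $\sigma_0$.
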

\begin{proof}
Since $\M_m = \M$ is manifestly its own $m$-dimensinal subcomplex, it suffices by induction to show that if $\M_{d+1}$ is a simplicial complex of dimension $\leq (d+1)$, then the simplices in $\M_{d} \subset \M_{d+1}$ constitute a subcomplex of dimension $\leq d$. To this end, we will confirm that the difference $\M_{d+1} - \M_{d}$ satisfies two properties --- it must:
\begin{itemize}
	\item contain all the $(d+1)$-simplices in $\M_{d+1}$, and
	\item be upward closed with respect to the face partial order of $\M_{d+1}$.
\end{itemize}  Since every $(d+1)$-simplex is isomorphic to itself in $\Face_{W_{d+1}}(\M)$ via the identity morphism, the first requirement is immediately met. And by definition of $W_{d+1}$, if an arbitrary simplex $\sigma$ of $\M_{d+1}$ is isomorphic to a $(d+1)$-simplex in $\Face_{W_{d+1}}(\M)$, then so are all the simplices that lie in its open star $\textbf{st}_{d+1}~\sigma \subset \M_{d+1}$. Thus, our second requirement is also satisfied and the desired conclusion follows.   
\end{proof}

The structure of the sets $W_{\bullet}$ from Def \ref{def:subpair} enforces a convenient monotonicity among morphisms in the localization $\Face_{W_\bullet}(\M)$.
\begin{lemma}
	\label{lem:mono}
For each $d \in \{0,1,\ldots,m\}$, there are no morphisms in the localization $\Face_{W_d}(\M)$ from any simplex $\sigma$ in the difference $\M - \M_d$ to a simplex $\tau$ of $\M_{d}$.
\end{lemma}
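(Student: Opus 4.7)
The plan is to argue by downward induction on $d$, starting at $d = m$ where $\M - \M_m = \varnothing$ makes the statement vacuous. Fix $d < m$, assume the lemma at level $d+1$, and suppose toward contradiction we are given a $W_d$-zigzag
\[
\sigma = \rho_0 \leq \rho_1 \geq \rho_2 \leq \cdots \leq \rho_{2k+1} = \tau
\]
with $\sigma \in \M - \M_d$ and $\tau \in \M_d$. I would then show, by induction along the zigzag, that every $\rho_i$ lies in $\M - \M_d$, which contradicts $\tau \in \M_d$.

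Two observations drive this traversal. First, $\M - \M_d$ is \emph{upward closed} in the face poset of $\M$: if $\alpha \in \M_j - \M_{j-1}$ for some $j > d$ and $\beta \geq \alpha$ in $\M$, then $\beta \in \M_d$ would force $\beta \in \M_{j-1} \cap \M_j$, contradicting upward closure of $\M_j - \M_{j-1}$ inside $\M_j$ which was established in the preceding proposition. This disposes of forward arrows $\rho_i \leq \rho_{i+1}$ along the zigzag. Second, for backward arrows $\rho_i \geq \rho_{i+1}$ that are not equalities, I would need the following key claim: if $(\alpha \leq \beta) \in W_d$ and $\beta \in \M - \M_d$, then $\alpha \in \M - \M_d$.

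To prove the key claim, decompose $W_d = W_{d+1} \cup \{(\alpha \leq \beta) \mid \alpha \in U_d,\, \beta \in \textbf{st}_d\,\alpha\}$ as in Def \ref{def:subpair}. A pair in the newly added piece has both endpoints in $\M_d$, which is incompatible with $\beta \notin \M_d$, so that case is vacuous. For a pair $(\alpha \leq \beta) \in W_{d+1}$, consider two subcases. If $\beta \in \M - \M_{d+1}$, the backward arrow $\beta \geq \alpha$ constitutes a morphism $\beta \to \alpha$ in $\Face_{W_{d+1}}(\M)$, so the inductive hypothesis at level $d+1$ forces $\alpha \notin \M_{d+1}$, hence $\alpha \in \M - \M_d$. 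If instead $\beta \in \M_{d+1} - \M_d$, then by the very definition of this stratum $\beta$ is isomorphic in $\Face_{W_{d+1}}(\M)$ to some $(d+1)$-simplex; the relation $(\alpha \leq \beta) \in W_{d+1}$ together with the cancellation rule from the remark after Def \ref{def:loc} makes $\alpha \cong \beta$ in that localization, so $\alpha$ is also isomorphic to a $(d+1)$-simplex, placing $\alpha \in \M_{d+1} - \M_d \subseteq \M - \M_d$.

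I expect the main conceptual hurdle is precisely the last subcase: recognizing that an individual forward relation $(\alpha \leq \beta)$ which happens to lie in $W_{d+1}$ genuinely transports $\beta$'s $W_{d+1}$-isomorphism class onto $\alpha$, so that $\alpha$ inherits membership in $\M_{d+1} - \M_d$. Once this observation is in place, the remainder is routine bookkeeping along the zigzag using upward closure.
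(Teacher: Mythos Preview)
Your argument is correct, but it takes a genuinely different route from the paper's. The paper gives a direct, non-inductive proof: since $\sigma \in \M - \M_d$, it lies in some canonical $i$-stratum with $i > d$, which forces $\sigma \in U_i$ and hence places even the first relation $(\sigma \leq \tau_0)$ into $W_i \subset W_d$; the entire zigzag then represents an \emph{isomorphism} in $\Face_{W_d}(\M)$, so $\tau$ inherits $\sigma$'s isomorphism class and cannot lie in $\M_d$. You instead run a downward induction on $d$ together with a step-by-step traversal of the zigzag, using the decomposition $W_d = W_{d+1} \cup (\text{new part})$ and the inductive hypothesis at level $d+1$ to push membership in $\M - \M_d$ across each backward arrow. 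Your approach is longer but more transparent, since every step is justified locally without appealing to the structure of higher strata all at once; the paper's approach is quicker but compresses several nontrivial observations (for instance, that the forward relations in the zigzag also lie in $W_d$) into a terse sentence.

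One small point worth making explicit in your subcase 2b: before concluding $\alpha \in \M_{d+1} - \M_d$ you need $\alpha \in \M_{d+1}$, which follows immediately from $\alpha \leq \beta \in \M_{d+1}$ and the fact that $\M_{d+1}$ is a subcomplex. Also, your justification of upward closure of $\M - \M_d$ is correct but roundabout; it is simpler just to say that $\M_d$ is a subcomplex, hence downward closed, so its complement is upward closed.
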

\begin{proof}
Any putative morphism from $\sigma$ to $\tau$ in $\Face_{W_d}(\M)$ would have to be represented by a $W_{d}$-zigzag, say
\[
\zeta = (\sigma \leq \tau_0 \geq \sigma_0 \leq \cdots \leq \tau_k \geq \sigma_k \leq \tau).
\] Note that all face relations appearing here, except possibly the first $(\sigma \leq \tau_0)$, must lie in $W_{d}$ by upward closure. Since $\sigma \in \M - \M_d$, it must be isomorphic in $\Face_{W_{i}}(\M)$ to an $i$-simplex in $\M_{i}$ for some $i > d$. But the very existence of a zigzag representing such an isomorphism requires the bisheaf $\bsh{F}$ to be constant on the open star $\textbf{st}_{i}~\sigma$, meaning that $(\sigma \leq \tau_0)$ must lie in $W_{i} \subset W_d$. Thus, all the face relations ($\leq$ and $\geq$) encountered in $\zeta$ lie in $W_d$, whence $\zeta$ must be an isomorphism in the localization $\Face_{W_d}(\M)$ (with its inverse being given by backwards traversal). But now, $\tau$ would also be isomorphic to some $i$-simplex in $\Face_{W_{i}}(\M)$ with $i > d$, which forces the contradiction $\tau \not\in \M_d$. 
\end{proof}
 
Here is our main result.

\begin{theorem}
The sequence $\M_\bullet$ of simplicial complexes described in Def \ref{def:subpair} is the canonical $\bsh{F}$-stratification of $\M$. Moreover, for each $d \in \{0,1,\ldots,m\}$, the canonical $d$-strata of $\M_\bullet$ are isomorphism classes of $d$-simplices from $\M_d$ in the localization $\Face_{W_d}(\M)$.
\end{theorem}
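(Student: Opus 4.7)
The plan is to establish both assertions simultaneously by downward induction on $d$. The base case $d = m$ is precisely Proposition \ref{prop:topstrat}, which identifies the canonical $m$-strata with the isomorphism classes of $m$-simplices in $\Face_{W_m}(\M) = \Face_W(\M)$; by Def \ref{def:subpair} the complement $\M_m - \M_{m-1}$ is exactly the union of these classes. For the inductive step at dimension $d < m$, I would assume that for each $d' > d$ the canonical $d'$-strata of $\M_\bullet$ have been identified as iso classes of $d'$-simplices in $\Face_{W_{d'}}(\M)$, and moreover that every simplex belonging to a $d'$-stratum of an arbitrary $\bsh{F}$-stratification must lie outside $\M_d$.

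The inductive step splits into two halves that mirror Propositions \ref{prop:isostrat} and \ref{prop:topstrat}. The first half asserts: if $\tau$ lies in a $d$-stratum $S$ of an arbitrary $\bsh{F}$-stratification, then $\tau \in \M_d$, and $\tau$ is isomorphic in $\Face_{W_d}(\M)$ to a $d$-simplex $\sigma \in S$ supplied by the dimension axiom. The membership $\tau \in \M_d$ is forced by Lemma \ref{lem:mono}, since otherwise no zigzag in $\Face_{W_d}(\M)$ could link $\tau$ to $\sigma$. A connecting zigzag $\sigma \leq \tau_0 \geq \sigma_0 \leq \cdots \leq \tau$ in $S$ then has every face relation in $E$ by constructibility. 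The crucial step is to verify that each backward-source $\sigma_i$ lies in $U_d$: by the frontier grading, every $\nu \geq \sigma_i$ lies in a stratum of dimension $\geq d$; the inductive hypothesis forbids strata of dimension $> d$ from meeting $\M_d$, so any $\nu \in \textbf{st}_d~\sigma_i$ must already lie in $S$, and constructibility forces $(\sigma_i \leq \nu) \in E$. Hence the zigzag is a $W_d$-zigzag, yielding the desired isomorphism.

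The second half asserts that for any $d$-simplex $\sigma \in \M_d$, the iso class $S_\sigma \subseteq \M_d$ in $\Face_{W_d}(\M)$ is a legitimate canonical $d$-stratum. Connectedness of $S_\sigma$ follows from the defining zigzags, and the dimension axiom is immediate. Constructibility is obtained verbatim by the bisheaf-cube argument of Proposition \ref{prop:topstrat} applied to a horizontally reduced zigzag, where the top-dimensional nature of $\sigma$ within $\M_d$ forces $\tau_0 = \sigma$ in the reduced form. For the frontier axiom within $\M_d$, I would use upward closure of $U_d$, verified by a two-out-of-three argument for isomorphisms applied to composites of restriction (and extension) maps, exactly as in the top-strata analysis. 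Across dimensions, the subcomplex structure of $\M_d \subset \M$ together with the inductive hypothesis supplies the graded partial order on strata.

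Uniqueness and universality then follow immediately from the first half: any $d$-stratum of any other $\bsh{F}$-stratification, being connected and satisfying the three axioms, maps into a single iso class and is therefore contained in a canonical $d$-stratum of $\M_\bullet$. The main obstacle is verifying $U_d$-membership of the backward-sources $\sigma_i$ in the first half, where the inductive hypothesis, the frontier grading, and Lemma \ref{lem:mono} must all interlock cleanly; once this is in place, the remaining verifications proceed in close parallel with the top-strata arguments.
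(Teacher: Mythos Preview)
Your proposal follows essentially the same approach as the paper: downward induction on $d$ with base case Proposition~\ref{prop:topstrat}, an inductive step split into analogues of Propositions~\ref{prop:isostrat} and~\ref{prop:topstrat}, and appeals to Lemma~\ref{lem:mono} for the dimension and frontier checks. The paper's proof is terser---it verifies the three axioms for the iso class directly via Lemma~\ref{lem:mono} and then invokes the Proposition~\ref{prop:isostrat} argument for universality in one sentence---but the structure matches yours, and you have correctly located the delicate point (the $U_d$-membership of backward sources) that the paper leaves implicit.
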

\begin{proof}
We proceed by reverse-induction on $d$, with the base case $d=m$ being given by Prop \ref{prop:topstrat}. So we assume that the statement holds up to $(d+1)$, and establish that the canonical $d$-strata must be isomorphism classes of $d$-simplices from $\M_d$ in the localization $\Face_{W_d}(\M)$. Let $S$ denote the isomorphism class of a $d$-simplex $\sigma_*$ in $\M_d$. We will establish that $S$ satisfies all three axioms of Def \ref{def:fstrat}.
\begin{itemize}
\item {\bf Dimension:} clearly, $S$ contains a simplex $\sigma_*$ of dimension $d$; moreover, since $\dim \M_d \leq d$, all simplices of $\M$ with dimension $> d$ lie in $\M - \M_{d}$. None of these can be isomorphic in $\Face_{W_d}(\M)$ to $\sigma_*$ without contradicting Lem \ref{lem:mono}. 
\item {\bf Frontier:} it suffices to check antisymmetry of the relation $\prec$: there should be no simplices $\sigma \leq \sigma'$ with $\sigma \in \M-\M_d$ and $\sigma' \in S$. But the existence of such a $\sigma \leq \sigma'$ would result in a $W_d$-zigzag from $\sigma$ to $\sigma_*$, which is prohibited by Lem \ref{lem:mono}.
\item {\bf Constructibility:} it is straightforward to adapt the argument from the proof of Prop \ref{prop:topstrat} --- given simplices $\tau \leq \tau'$ both in $S$, one can find $W_d$-zigzags from $\sigma_*$ to $\tau$ and to $\tau'$ which guarantee that $\shf{F}(\tau \leq \tau')$ and $\csh{F}(\tau \leq \tau')$ are both isomorphisms.
\end{itemize}
To confirm that the strata obtained in this fashion are canonical, one can re-use the argument form the proof of Prop \ref{prop:isostrat} to show that a simplex which lies in a $d$-stratum of any $\bsh{F}$-stratification is isomorphic in $\Face_{W_d}(\M)$ to a $d$-simplex from $\M_d$, meaning that the  strata can not be any larger than these isomorphism classes.
\end{proof}

Finally, we remark that since the sets $W_\bullet$ defined in \ref{def:subpair} form a sequence that increases as $d$ decreases, the set of $W_d$-zigzags is contained in the set of $W_{d-1}$-zigzags and so forth. Therefore, successive localization of $\Face(\M)$ about these $W_\bullet$'s creates a nested sequence of categories:
\[
\Face_{W_m}(\M) \inj \Face_{W_{m-1}}(\M) \inj \cdots \inj \Face_{W_1}(\M) \inj \Face_{W_0}(\M).
\]
And thanks to the monotonicity guaranteed by Lem \ref{lem:mono}, isomorphism classes of $d$-simplices from $\M_d$ in $\Face_{W_d}(\M)$ are stable under inclusion to $\Face_{W_i}(\M)$ for $i \leq d$, since no simplex of $\M_i$ can ever become isomorphic to a simplex from $\M_d - \M_i$ in this entire sequence of categories. Consequently, we can extract all the canonical strata just by examining isomorphism classes in a single category.

\begin{corollary}
The $d$-dimensional strata of the canonical $\bsh{F}$-stratification of $\M$ are isomorphism classes of $d$-simplices from $\M_{d}$ in $\Face_{W_0}(\M)$.
\end{corollary}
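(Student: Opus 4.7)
The plan is to reduce the corollary to the theorem by showing that for every $d$-simplex $\sigma_* \in \M_d$, the isomorphism class of $\sigma_*$ in $\Face_{W_0}(\M)$ coincides with its isomorphism class in $\Face_{W_d}(\M)$. Given this equality, the corollary follows at once from the theorem's identification of the canonical $d$-strata with iso classes in $\Face_{W_d}(\M)$. One direction is automatic: since $W_d \subseteq W_0$, the identity on objects extends to a functor $\Face_{W_d}(\M) \to \Face_{W_0}(\M)$ that preserves isomorphisms, so the iso class of $\sigma_*$ in $\Face_{W_d}(\M)$ is contained in its iso class in $\Face_{W_0}(\M)$.

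The content lies in the reverse inclusion, which I would establish by descending induction on $i$ from the trivial base case $i = d$ down to $i = 0$. Assuming the iso class of $\sigma_*$ in $\Face_{W_{i+1}}(\M)$ already equals its iso class in $\Face_{W_d}(\M)$, we propagate the equality to $\Face_{W_i}(\M)$. The inductive step rests on a structural observation drawn from Def \ref{def:subpair}: every face relation in $W_i \setminus W_{i+1}$ has both its source and its target confined to $\M_i$.

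So given any $W_i$-zigzag $\zeta$ from $\sigma_*$ to some $\mu$, suppose for contradiction that $\zeta$ employs a face relation from $W_i \setminus W_{i+1}$ as a backward step, and take the \emph{first} such position. Truncating $\zeta$ just before this position yields a $W_{i+1}$-zigzag from $\sigma_*$ to some simplex $\alpha \in \M_i$ (the endpoint lies in $\M_i$ because the offending backward relation immediately after it is new at level $i$, so by Def \ref{def:subpair} its two endpoints both lie in $\M_i$). However, $\sigma_* \in \M_d - \M_{d-1}$ together with $i < d$ force $\sigma_* \in \M - \M_i$, so this truncated zigzag, viewed as a morphism in the coarser category $\Face_{W_i}(\M)$, directly contradicts Lem \ref{lem:mono} at level $i$. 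Hence no such backward step can exist, $\zeta$ is already a $W_{i+1}$-zigzag, and $\mu$ lies in the iso class of $\sigma_*$ in $\Face_{W_{i+1}}(\M)$, which by the inductive hypothesis coincides with its iso class in $\Face_{W_d}(\M)$.

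The main obstacle is the careful bookkeeping in this inductive step---verifying that truncation at the first backward use of a $W_i \setminus W_{i+1}$-relation produces a bona-fide $W_{i+1}$-zigzag whose terminus lies in $\M_i$---after which the contradiction with Lem \ref{lem:mono} is immediate. The whole argument is essentially a formal rendering of the brief remark that appears immediately before the corollary.
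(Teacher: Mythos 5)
Your proposal is correct and follows essentially the same route as the paper: the paper's ``proof'' is the brief remark immediately preceding the corollary, and your argument fleshes out precisely the claim there that no simplex of $\M_i$ can ever become isomorphic in these localizations to a simplex from $\M_d - \M_i$, by descending on the level $i$, isolating the first backward use of a relation in $W_i \setminus W_{i+1}$ (whose two endpoints both lie in $\M_i$ by Def \ref{def:subpair}), truncating there, and invoking Lem \ref{lem:mono}. One step deserves tightening: from ``$\zeta$ is already a $W_{i+1}$-zigzag'' you conclude that $\mu$ lies in the isomorphism class of $\sigma_*$ in $\Face_{W_{i+1}}(\M)$, but a morphism of a localization being represented by a $W_{i+1}$-zigzag does not by itself make it invertible in $\Face_{W_{i+1}}(\M)$; you should also run the truncation argument on a representing zigzag $\zeta'$ for the inverse (or on the loop $\zeta'\cdot\zeta$) and observe that the horizontal/vertical moves witnessing $\zeta'\zeta \sim \mathrm{id}_{\sigma_*}$ and $\zeta\zeta' \sim \mathrm{id}_\mu$ likewise use only $W_{i+1}^+$, since every zigzag in sight emanates from a simplex in the $\Face_{W_d}(\M)$-class of $\sigma_*$, which lies in $\M - \M_{d-1}$. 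The published remark elides exactly this point, so your account is already rather more explicit than the paper's.
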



\begin{thebibliography}{99}
	
	\bibitem {interlevelsets} Bendich, P., Edelsbrunner, H., Morozov, D., Patel, A.: 
	Homology and robustness of level and interlevel Sets.
	{Homology Homotopy Appl.} {\bf 15}, 51--72 (2011).	
		
	
	\bibitem{cdsgo} Chazal, F., de Silva, V., Glisse, M., Oudot, S.: {Structure and stability of persistence modules}. Springer, Heidelberg (2016).
	
	\bibitem{cseh} Cohen-Steiner, D., Edelsbrunner, H., Harer, J.L.: Stability of persistence diagrams. {Disc. \& Comput. Geom.}, {\bf 37}, 103--120 (2007).
	
	\bibitem{curry} Curry, J.: Sheaves, cosheaves and applications. {arXiv:1303.3255 [math.AT]}, (2013).
	
	\bibitem{cgn} Curry, J., Ghrist, R, Nanda, V.: {Discrete Morse theory for computing cellular sheaf cohomology}. {Found. Comput. Math.} {\bf 16}, 875 -- 897 (2016).

	\bibitem{gabzis} Gabriel, P., Zisman, M.: {Calculus of fractions and homotopy theory}. Springer-Verlag, Heidelberg (1967).
	

	\bibitem{gm2} Goresky, M., MacPherson, R.: Intersection Homology {II}. {Inventiones Mathematicae} {\bf 71}, 77--129 (1983). 
	
	\bibitem{hatcher} Hatcher, A.: {Algebraic Topology}, Cambridge University Press, Cambridge (2002).

	\bibitem{maclane} {Mac Lane}, S.: {Categories for the Working Mathematician}. Springer, Heidelberg (1998).
	
	\bibitem{macpat} MacPherson, R., Patel, A.: Persistent local systems. {arXiv:1805.02539v1 [math.AT]} (2018).
	
	\bibitem{milnor} Milnor, J.: {Morse theory}, Princeton University Press, Princeton (1963).
	
	\bibitem{strat} Nanda, V.: Local cohomology and stratification. {Found. Comput. Math.} (2017). doi: 10.1007/s10208-019-09424-0
	
	\bibitem{whitney} Whitney, H.: Tangents to an analytic variety. {Ann. Math.} {\bf 81}, 468--549 (1965).
	
\end{thebibliography}
\end{document}